\newtheorem{thm}{Theorem}[section]
\newtheorem{cor}[thm]{Corollary}
\newtheorem{prop}[thm]{Proposition}
\newtheorem{rem}[thm]{Remark}
\newtheorem{exe}[thm]{Example}
\newtheorem{defn}[thm]{Definition}
\newcommand{\bexe}{\begin{exe} \textup}
\newcommand{\eexe}{\end{exe} }
\newcommand{\bremark}{\begin{rem} \textup}
\newcommand{\eremark}{\end{rem} }
\newcommand{\cuad}{{\sqcap\kern-.68em\sqcup}}
\newcommand{\R}{{\mathbb{R}}}
\newcommand{\N}{{\mathbb{N}}}
\begin{document}

\title [Nonexistence of stable solutions ]{Nonexistence of stable solutions to quasilinear elliptic equations on Riemannian manifolds   }
       \date{}
       \author[]{D.D. Monticelli, F. Punzo and B. Sciunzi}

\thanks{
{\em D.D. Monticelli} -- Politecnico di Milano -- (Dipartimento di Matematica)
-- V. Bonardi 9, Milano, Italy. Email: {\tt
dario.monticelli@polimi.it}, partially supported by GNAMPA Project 2016 ``Strutture speciali e PDEs in geometria Riemanniana''.\\
{\em F. Punzo} -- Universit\`a della Calabria -- (Dipartimento di
Matematica e Informatica) -- V. P. Bucci, 31 B -- Rende
(CS), Italy. Email: {\tt fabio.punzo@unical.it}, partially supportedy by GNAMPA Project 2016 ``Propriet\`a qualitative di soluzioni di equazioni ellittiche e paraboliche non lineari''. \\
{\em B. Sciunzi} -- Universit\`a della Calabria -- (Dipartimento di Matematica e Informatica) -- V. P. Bucci, 31 B -- Rende (CS), Italy. Email: {\tt sciunzi@mat.unical.it}. Partially supported by GNAMPA Project 2016 ``Propriet\`a qualitative di soluzioni di equazioni ellittiche e paraboliche non lineari'', and by {\em MIUR Me\-to\-di va\-ria\-zio\-na\-li ed equa\-zio\-ni dif\-fe\-ren\-zia\-li non\-li\-nea\-ri}.
}

\subjclass[2010]{35B35, 35J70, 58J05}

\begin{abstract}
 We prove nonexistence of nontrivial, possibly sign changing, stable solutions to a class of quasilinear elliptic equations with a potential on Riemannian manifolds, under suitable weighted growth conditions on geodesic balls.
\end{abstract}

 \maketitle

\section{Introduction}
In this paper we investigate nonexistence of nontrivial, possibly sign changing, {\em stable} solutions of equation
\begin{equation}\label{eq-1}
-\Delta_p\, u=V(x)|u|^{\sigma-1}u \quad\quad \text{in } M,
\end{equation}
where $M$ is a  complete, non-compact Riemannian manifold of
dimension $m$ endowed with a metric tensor $g$, $V$ is a given positive function, $\Delta_p$ denotes the $p-$Laplace operator, that is
\[\Delta_p u:=\operatorname{div}\left\{ |\nabla u|^{p-2} \nabla u \right\}, \]
where $\operatorname{div}$ and $\nabla$ are the divergence operator and the gradient associated to the metric $g$, respectively. In particular, for $p=2, \Delta_p$ becomes the standard Laplace-Beltrami operator on $M$.
Concerning the parameters $\sigma$ and $p$, and the potential $V$, we always assume that
\[ \sigma> p-1\geq 1,\]
and that
\[ V\in L^1_{\textrm{loc}}(M), \quad V> 0 \quad \textrm{a.e. in}\;\; M\,. \]
The precise notions of solutions and of stability are given in Definitions \ref{def1} and \ref{defst} below, respectively.

\smallskip
Starting from the seminal paper \cite{FA2},
for $M=\mathbb R^m$, $V\equiv 1$ solutions of equation \eqref{eq-1} have been largely studied in the literature (see, e.g., \cite{Bi}-\cite{GS2}, \cite{JL}, \cite{MitPohoz359}-\cite{MitPohozMilan}). In particular, in  \cite{FA2} it is shown that if $M=\mathbb R^m$, $V\equiv 1$, $p=2$ and
\begin{equation}\label{e1ia}
\begin{cases}
1<\sigma<+\infty & \textrm{if}\;\; m\leq 10\,, \\
1<\sigma<\sigma_c(m):=\frac{(m-2)^2 - 4m + 8\sqrt{m-1}}{(m-2)(m-10)} & \textrm{if}\;\; m\geq 11\,,
\end{cases}
\end{equation}
then the unique stable solution of equation \eqref{eq-1} is $u\equiv 0$. Furthermore, the previous result has been generalized to the case  $p>2$ in \cite{DFSV}. Indeed, when $M=\mathbb R^m$, $V\equiv 1$, $p>2$, it is established that if $u$ is a stable solution of equation \eqref{eq-1} and
  \begin{equation}\label{e1i}
\begin{cases}
p-1<\sigma<+\infty & \textrm{if}\;\; m\leq \frac{p(p+3)}{p-1}\,, \\
p-1<\sigma<\sigma_c(m, p) & \textrm{if}\;\; m> \frac{p(p+3)}{p-1}\,,
\end{cases}
\end{equation}
with
\begin{equation}\label{e5i}
 \sigma_c(m, p):=\frac{[(p-1)m -p]^2 + p^2(p-2) - p^2(p-1)m + 2p^2\sqrt{(p-1)(m-1)}}{(m-p)[(p-1)m - p(p+3)]}\,,
\end{equation}
then $u\equiv 0$ in $\mathbb R^m$. We should observe that in \cite{FA2} and in \cite{DFSV} more regular solutions are used; specifically, solutions of class $C^2$ and $C^{1, \alpha}_{\rm loc}$, respectively, are considered. However, in view of standard regularity results (see \cite{Di}), weak solutions meant in the sense of Definition \ref{def1} belong to those classes, e.g. if $V$ is locally bounded.

\smallskip

On the other hand, nonexistence of {\em positive} solutions (not necessarily stable) of equation \eqref{eq-1}  has also been studied on Riemannian manifold (see, e.g., \cite{GrigKond}, \cite{GrigS}, \cite{MaMoPu1}, \cite{Sun1}, \cite{Sun2}).  More precisely, denote by $d\mu$  the canonical Riemannian volume element on $M$; fix any reference point $o\in M$ and set $B_R:=\{x\in M\,:\, \operatorname{dist}(x, o) < R \}\,.$ Furthermore, let
\begin{equation}\label{e3i}
\alpha_0:= \frac{p\sigma}{\sigma-p+1}\,,\quad \beta_0:=\frac{p-1}{\sigma -p +1}\,.
\end{equation}
In \cite{MaMoPu1} it is proved that equation \eqref{eq-1} does not admit any nontrivial positive solution, provided that there exist $C_0>0$, $k\in (0, \beta_0)$
such that, for every $R>0$ sufficiently large and for every $\varepsilon>0$ sufficiently small,
\begin{equation}\label{e4i}
\int_{B_R\setminus B_{\frac R 2}} V^{-\beta_0 +\varepsilon} \, d\mu \leq C R^{\alpha_0 + C_0\varepsilon} (\log R)^k\,.
\end{equation}
The same conclusion remains true, if instead of \eqref{e4i} it is assumed that there exist $C_0>0$
such that, for every $R>0$ sufficiently large and for every $\varepsilon>0$ sufficiently small,
\[ \int_{B_R\setminus B_{\frac R 2}} V^{-\beta_0 +\varepsilon} \, d\mu \leq C R^{\alpha_0 + C_0\varepsilon} (\log R)^{\beta_0}\,,\]
and
\[  \int_{B_R\setminus B_{\frac R 2}} V^{-\beta_0 -\varepsilon} \, d\mu \leq C R^{\alpha_0 + C_0\varepsilon} (\log R)^{\beta_0}\,. \]
Such results are in agreement with those in  \cite{MitPohozMilan} for $M=\mathbb R^m$. In fact, in this case no nonnegative, nontrivial solution exists, provided that
\[ V\equiv 1, \quad m>p, \quad p-1<\sigma<\frac{m(p-1)}{m-p}\,.\]
Similar results have  been also obtained for parabolic equations (see, e.g., \cite{MaMoPu2}, \cite{MitPohozMilan}, \cite{PoTe})\,.
\medskip

The aim of this paper is to study nonexistence of {\em stable}, possibly sign changing solutions of equation \eqref{eq-1} on Riemannian manifolds. We show that no nontrivial stable solution exists, provided that a condition similar to \eqref{e4i} is satisfied (see Theorems \ref{thm1} and \ref{thm2} below). Note that for $M=\mathbb R^m$, $V\equiv 1$, our results are in accordance with those stated in \cite{DFSV} (see Example \ref{ex1} below). In order to prove our results, first we deduce suitable a priori estimates for stable solutions of equation \eqref{eq-1}, by using the definition of stable weak solutions (see Propositions \ref{PR:1} and \ref{PR:2} below). Then by choosing an appropriate family of test functions, depending on two parameters, in such a priori estimates, we infer that any stable solution $u$ of equation \eqref{eq-1} vanishes identically.

The estimate in Proposition \ref{PR:1} is similar to that in \cite[Proposition 1.4]{DFSV} (where $V\equiv 1$, $M=\mathbb R^m$); however, it is more accurate. In particular, we explicitly compute the presence of $\delta=\bar{\gamma}-\gamma$ in the r.h.s. of the integral estimate \eqref{EQ:10bis}, see also \eqref{defgamma} and \eqref{defdelta}; this will be expedient in order to use the appropriate test functions in the sequel. On the other hand, the estimate in Proposition \ref{PR:2} is new.

Let us mention that we do not use the same test functions as in \cite{DFSV}. Indeed, by doing so, weaker results than ours will be obtained. We use the same family of test functions, depending on two parameters, employed in \cite{GrigS}, \cite{MaMoPu1}. However, in \cite{GrigS}, \cite{MaMoPu1} such test functions  were used in different a priori estimates, therefore in our situation in order to get the conclusion new estimates are necessary.

\medskip

The paper is organized as follows. In Section \ref{MRes} we state our main results, which are then proved in Section \ref{PrMRes}. Section \ref{ARes} contains some important auxiliary tools, that are used in the proofs of the main theorems.

\section{Statement of the main results}\label{MRes}
Denote by $\operatorname{Lip}_{\rm loc}(M)$ the set of Lipschitz functions
$\varphi:M\rightarrow\R$ having compact support.
For any open domain $\Omega\subset M$ and $p>1$, let
$W^{1,p}(\Omega)$ be the completion with respect to the norm
$$\|w\|_{W^{1,p}(\Omega)}=\left(\int_\Omega|w|^p\,d\mu+\int_\Omega|\nabla w|^p\,d\mu\right)^\frac{1}{p}$$ of the space of locally Lipschitz
functions $w:\Omega\rightarrow\R$ having finite $W^{1,p}(\Omega)$
norm. For any function $u:M\rightarrow\R$, we say that $u\in
W^{1,p}_\text{loc}(M)$ if for every $\varphi \in \operatorname{Lip}_{\rm loc}(M)$ one has that
$u\varphi\in W^{1,p}(M)$.

\begin{defn}\label{def1}
We say that $u\in W^{1,p}_{\rm loc}(M)\cap L^\sigma_{\rm loc}(M)$
is a weak solution of \eqref{eq-1} if, for every
$\varphi\in \operatorname{Lip}_{\rm loc}(M) $, one has
\begin{equation}\label{EQ:1}
\displaystyle\int_M |\nabla u|^{p-2}(\nabla u,\nabla \varphi)\,
d\mu=\int_{M}V(x) |u|^{\sigma-1}u \varphi\,d\mu\,,
\end{equation}
where $(\cdot,\cdot)$ is the scalar product
associated to the metric $g$.
\end{defn}
We explicitly note that the solution $u$ in the above definition can
change sign and can be unbounded on $M$. Moreover, it is well known
that the class of test functions used in Definition \ref{def1} can
be equivalently restricted to $\varphi\in C^\infty_c(M)$.

\smallskip

The {\it linearized operator} of \eqref{EQ:1} at $u$ is given by
\begin{equation*}
\begin{split}
L_u (v,\varphi)\,=& \int_M|\nabla u|^{p-2}(\nabla v,\nabla
\varphi)\,d\mu+\\
&+ \int_M \big\{ (p-2)|\nabla u|^{p-4}(\nabla u,\nabla v)\,g(\nabla
u,\nabla \varphi) -\sigma V(x)|u|^{\sigma-1} v
\varphi\,\big\} d\mu\end{split}
\end{equation*}
for every $v,\varphi \in \operatorname{Lip}_{loc}(M)$.

\begin{defn}\label{defst}
We say that a weak solution $u\in W^{1,p}_{\rm loc}(M)\cap
L^\sigma_{\rm loc}(M)$ of equation \eqref{eq-1} is {\em stable}, if
$$
L_u(\varphi,\varphi) \geq 0\qquad \forall \varphi \in \operatorname{Lip}_{loc}(M)\,.
$$
\end{defn}

\medskip

To state the following result we need to introduce some notation. We
set
\begin{equation}\label{defgamma}
\bar\gamma\,=\,\bar\gamma_{p,\sigma}\,:=\,
\frac{2\sigma-p+1+2\sqrt{\sigma(\sigma-p+1)}}{p-1}
\end{equation}
and, for any $\gamma\in\,[ \frac{\sigma}{p-1}\,,\,\bar\gamma)$ we set
\begin{equation}\label{defdelta}
\delta\,=\,\delta_\gamma\,:=\,\bar\gamma-\gamma.
\end{equation}

Let $r(x)$ denote the geodesic distance of a point $x\in M$ from a fixed origin $o\in M$ and for every $R>0$ let $B_R$ be the geodesic ball with radius $o$ and radius $R$.

\begin{defn}
Let
\begin{equation}\label{alfabeto}
\alpha:=p\frac{\sigma+\overline\gamma}{\sigma-p+1}\qquad\beta:=\frac{\overline\gamma+p-1}{\sigma-p+1}\,.
\end{equation}
We say that condition {\bf (HP1)} holds if there exist positive
constants $C$, $C_0, R_0, \epsilon_0$ such that for every $\epsilon\in (0, \epsilon_0)$
and $R\in (R_0, \infty)$ one has
\begin{equation}\label{volHP2}
\int_{B_R\setminus B_{R/2}}V^{-\beta+\epsilon}\,d\mu \leq
CR^{\alpha+C_0\epsilon}(\log R)^{-1}.
\end{equation}
We say that condition {\bf (HP2)} holds if there exist positive
constants $C$, $C_0, R_0, \epsilon_0$, $\theta$ and a constant $b<-1+\frac{\theta}{\sigma-p+1}$ such that for every $\epsilon\in (0, \epsilon_0)$
and $R\in (R_0, \infty)$ one has
\begin{equation}\label{31}
\int_{B_R\setminus B_{R/2}}V^{-\beta+\epsilon}\,d\mu \leq
CR^{\alpha+C_0\epsilon}(\log R)^{b}e^{-\theta\epsilon \log\frac{R}{2}\log\log \frac{R}{2}}.
\end{equation}
\end{defn}

Let us observe that
$$\alpha >\alpha_0\,, \quad \beta>\beta_0,$$
where $\alpha_0, \beta_0$ are defined in \eqref{e3i}\,.

\begin{rem}\label{rem1}
We note that condition {\bf (HP1)} holds if there exist positive
constants $C$, $C_0$ such that
\begin{equation}\label{1}
0<V(x)\leq C(1+r(x))^{C_0}\qquad\text{ in }M
\end{equation}
and
\begin{equation}\label{2}
\int_{B_R\setminus B_{R/2}}V^{-\beta}\,d\mu \leq CR^{\alpha}(\log
R)^{-1}
\end{equation}
for every $R>0$ sufficiently large.

Similarly, condition {\bf (HP2)} holds if for some positive constants $C$, $C_0$, $\theta$ one
has
\begin{equation}\label{29}
0<V(x)\leq Cr(x)^{C_0}e^{-\theta \log r(x) \log\log r(x)}
\end{equation}
for every $x\in M\setminus B_{R^*}$ for some fixed $R^*>0$  and
\begin{equation}\label{30}
\int_{B_R\setminus B_{R/2}}V^{-\beta}\,d\mu \leq CR^{\alpha}(\log
R)^{b}
\end{equation}
for some $b<-1+\frac{\theta}{\sigma-p+1}$ and every $R>0$ sufficiently large.
\end{rem}

Now we can state our main nonexistence results.

\begin{thm}\label{thm1}
Suppose that condition {\bf (HP1)} holds. Let $u$ be a stable solution of
equation \eqref{eq-1}, then $u\equiv 0$ in $M$.
\end{thm}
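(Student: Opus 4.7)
The plan is to combine the a priori integral estimate for stable solutions provided by Proposition \ref{PR:1} with a cutoff function localizing on geodesic balls, then exploit the logarithmic factor in (HP1) by letting $R\to\infty$ jointly with a vanishing parameter $\epsilon$. To start, fix some $\gamma\in[\sigma/(p-1),\bar\gamma)$ so that $\delta=\bar\gamma-\gamma>0$ is a positive constant. By Proposition \ref{PR:1}, every stable solution $u$ of \eqref{eq-1} satisfies, for every $\varphi\in\operatorname{Lip}_{\rm loc}(M)$ with $\varphi\ge 0$, an integral estimate of the schematic form
\begin{equation*}
\int_M V|u|^{\sigma+\gamma}\varphi^{q}\,d\mu \,\le\, \frac{C(\gamma)}{\delta}\int_M |u|^{\sigma+\gamma}|\nabla\varphi|^{p_0}\varphi^{q-p_0}\,d\mu,
\end{equation*}
where $q,p_0$ are exponents depending only on $p,\sigma,\gamma$. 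The explicit $\delta$-dependence of the constant (stressed in the introduction) keeps the right-hand side finite for our fixed choice $\gamma<\bar\gamma$.

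Next, apply H\"older's inequality to the right-hand side with a small parameter $\epsilon>0$, splitting $|u|^{\sigma+\gamma}$ (paired with a positive fractional power of $V$, so that the resulting $V|u|^{\sigma+\gamma}$-piece can be absorbed into the left-hand side) from $|\nabla\varphi|^{p_0}$ (paired with the weight $V^{-\beta+\epsilon}$, with the same $\beta$ as in \eqref{alfabeto}). After absorption one obtains an inequality of the form
\begin{equation*}
\int_M V|u|^{\sigma+\gamma}\varphi^{q}\,d\mu \,\le\, C(\gamma,\delta,\epsilon)\int_M |\nabla\varphi|^{\alpha(\epsilon)}V^{-\beta+\epsilon}\,d\mu,
\end{equation*}
in which $\epsilon\mapsto\alpha(\epsilon)$ is continuous with $\alpha(0)=\alpha$ (the exponent in \eqref{alfabeto}) and $C(\gamma,\delta,\epsilon)$ stays bounded as $\epsilon\to 0^+$. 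Then test with a standard Lipschitz cutoff $\varphi=\varphi_R$ satisfying $\varphi_R\equiv 1$ on $B_{R/2}$, $\varphi_R\equiv 0$ on $M\setminus B_R$ and $|\nabla\varphi_R|\le C/R$; its gradient is supported on $B_R\setminus B_{R/2}$, so by (HP1)
\begin{equation*}
\int_M|\nabla\varphi_R|^{\alpha(\epsilon)}V^{-\beta+\epsilon}\,d\mu \,\le\, C\,R^{\alpha-\alpha(\epsilon)+C_0\epsilon}(\log R)^{-1}.
\end{equation*}
Since $\alpha(0)=\alpha$, the $R$-exponent is $O(\epsilon)$. Choosing $\epsilon=\epsilon_R\to 0$ slowly enough, say $\epsilon_R$ of the form $c\,\log\log R/\log R$ with $c$ small, forces $R^{O(\epsilon_R)}$ to grow only like a power of $\log R$ strictly smaller than $1$; the factor $(\log R)^{-1}$ then drives the right-hand side to $0$ as $R\to\infty$. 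Monotone convergence gives $\int_M V|u|^{\sigma+\gamma}\,d\mu=0$, and since $V>0$ a.e., one concludes $u\equiv 0$ in $M$.

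The main obstacle is step two: the H\"older split must produce exactly the integrand $V^{-\beta+\epsilon}|\nabla\varphi|^{\alpha(\epsilon)}$ with the \emph{same} $\alpha,\beta$ as in \eqref{alfabeto}, so that the scaling $|\nabla\varphi_R|\le C/R$ matches the right-hand side of (HP1) at $\epsilon=0$. This is the reason for the specific algebraic form of $\bar\gamma$, $\alpha$ and $\beta$ given by \eqref{defgamma} and \eqref{alfabeto}. Once this balance is achieved, the refinement $(\log R)^{-1}$ of (HP1) ---compared with the weaker logarithmic factor allowed in \eqref{e4i} for positive solutions--- is precisely what provides the slack needed to absorb the small $R^{O(\epsilon)}$ growth and close the argument.
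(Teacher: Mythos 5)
Your overall strategy (Proposition \ref{PR:1} plus a cutoff on balls, then a joint limit in $R$ and a small parameter) is the right starting point, but two essential steps fail as written. First, the small parameter $\epsilon$ in your H\"older split is not free: in Proposition \ref{PR:1} the exponent of $V$ on the right-hand side is $-\frac{\gamma+p-1}{\sigma-p+1}=-\beta+\frac{\delta}{\sigma-p+1}$ and the exponent of $|\nabla\psi|$ is $p\frac{\sigma+\gamma}{\sigma-p+1}=\alpha-\frac{p\delta}{\sigma-p+1}$, so your $\epsilon$ is a fixed multiple of $\delta=\bar\gamma-\gamma$; sending $\epsilon\to0$ forces $\gamma\to\bar\gamma$. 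But then the constant is \emph{not} bounded: Proposition \ref{PR:1} carries the factor $\delta^{-p\frac{\gamma+\sigma}{\sigma-p+1}}\sim\delta^{-\alpha}$, which blows up precisely in this limit (it reflects the degeneration of the stability inequality at $\gamma=\bar\gamma$). With your linear cutoff $|\nabla\varphi_R|\le C/R$ the resulting bound is of order $\delta^{-\alpha}R^{c\delta}(\log R)^{-1}$, and no coupling $\delta=\delta_R\to0$ makes this vanish: keeping $R^{c\delta}$ bounded forces $\delta\lesssim 1/\log R$, whence $\delta^{-\alpha}\gtrsim(\log R)^{\alpha}$ with $\alpha>1$, which overwhelms the single factor $(\log R)^{-1}$. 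The paper's proof avoids this by taking the two-parameter test function $\psi_n=\varphi\,\eta_n$ with $\varphi=(r/R)^{-C_1\delta}$ outside $B_R$ and $\eta_n$ a cutoff at scale $nR$: the gradient of $\varphi$ carries an explicit factor $C_1\delta$, so $|\nabla\varphi|^{p\frac{\sigma+\gamma}{\sigma-p+1}}$ produces $\delta^{\alpha+o(1)}$ and cancels the bad constant in the term $I_2$, while letting $n\to\infty$ first kills the outer term $I_1$.

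Second, even with the correct test functions, under {\bf (HP1)} (logarithmic exponent exactly $-1$, the critical case) this first stage only yields the uniform bound $\int_M V|u|^{\sigma+\bar\gamma}\,d\mu\le C<\infty$, not zero: the integral $\int_{R/2}^{\infty}r^{-s}(\log r)^{-1}\frac{dr}{r}$ with $s\sim\delta$ is bounded by a constant, and the $(\log R)^{-1}$ gain is exactly consumed rather than producing decay. To conclude one needs the second, genuinely new ingredient of the paper, Proposition \ref{PR:2}, whose right-hand side is localized on $M\setminus K=M\setminus B_R$ and contains the factor $\bigl[\int_{M\setminus B_R}V|u|^{\sigma+\bar\gamma}\,d\mu\bigr]^{\frac{\gamma+p-1}{\sigma+\bar\gamma}}$; the finiteness obtained in stage one then forces this tail to vanish as $R\to\infty$, giving $\int_M V|u|^{\sigma+\bar\gamma}\,d\mu=0$. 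Your proposal has no analogue of this bootstrap, so it cannot close the argument in the critical case that {\bf (HP1)} actually covers.
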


\begin{thm}\label{thm2}
Suppose that condition {\bf (HP2)} holds. Let $u$ be a stable solution of
equation \eqref{eq-1}, then $u\equiv 0$ in $M$.
\end{thm}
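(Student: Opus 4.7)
The strategy is to mimic the proof of Theorem \ref{thm1}, replacing Proposition \ref{PR:1} by the sharper estimate of Proposition \ref{PR:2} (which is precisely the refinement developed for this purpose in the paper), and adapting the two-parameter family of cutoff functions so as to exploit the exponential gain $\exp\{-\theta\epsilon\log(R/2)\log\log(R/2)\}$ appearing in \eqref{31}. The starting point is to fix $\gamma\in[\sigma/(p-1),\bar\gamma)$ and to plug into the estimate of Proposition \ref{PR:2} test functions of the form $\varphi_{R,T}^{s}$, where $s$ is an exponent tuned to $\gamma,\sigma,p$ that makes $\varphi_{R,T}^{s}$ admissible, and $\varphi_{R,T}$ is the same logarithmic cutoff already used in \cite{GrigS,MaMoPu1}, namely
\begin{equation*}
\varphi_{R,T}(x)=\min\Bigl\{1,\bigl(\log_{T}(R/r(x))\bigr)_{+}\Bigr\},\qquad T>1,\ R>T^{2},
\end{equation*}
whose gradient is supported in the annulus $\{R/T<r(x)<R\}$ and satisfies $|\nabla\varphi_{R,T}|\leq (r(x)\log T)^{-1}$.

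Next I would apply H\"older's inequality in the resulting right-hand side with exponents calibrated so that the potential appears with the power $V^{-\beta+\epsilon}$, turning the geometric integral over the annulus into a quantity directly controlled by hypothesis \eqref{31}. After grouping factors, this leads to an upper bound of the schematic form
\begin{equation*}
\int_M V|u|^{\sigma+\gamma}\varphi_{R,T}^{ps}\,d\mu \leq C\,\delta^{\kappa}\,(\log T)^{-(\alpha+C_{0}\epsilon)}\,R^{C_{0}\epsilon}(\log R)^{b}\exp\!\Bigl\{-\theta\epsilon\log\tfrac{R}{2}\log\log\tfrac{R}{2}\Bigr\},
\end{equation*}
for some $\kappa>0$ produced by Proposition \ref{PR:2} and with $\delta=\bar\gamma-\gamma$ as in \eqref{defdelta}. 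To force this bound to vanish one couples the three parameters $R,T,\epsilon$: a natural choice is $\log T\asymp \log R$ (so that $(\log T)^{-(\alpha+C_{0}\epsilon)}$ contributes a fixed negative power of $\log R$) together with $\epsilon=\epsilon(R)\to 0^{+}$ at a rate comparable to $1/\log\log R$, so that $R^{C_{0}\epsilon}$ stays sub-polynomial while $\exp\{-\theta\epsilon\log(R/2)\log\log(R/2)\}$ decays like a negative power of $R$ and hence absorbs the offending factor $(\log R)^{b}$.

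The hard part is to carry out this balancing rigorously: the condition $b<-1+\theta/(\sigma-p+1)$ in the statement of \textbf{(HP2)} is exactly the threshold that makes the combined contribution of $(\log R)^{b}$, $(\log T)^{-(\alpha+C_{0}\epsilon)}$ and the exponential factor tend to zero as $R\to\infty$, uniformly in a suitable range of $\epsilon(R)$; this is in sharp contrast with the case of \eqref{volHP2}, where the stronger logarithmic decay $(\log R)^{-1}$ makes the balancing essentially free. Once the right-hand side is shown to vanish in the limit $R\to\infty$ for every fixed $\gamma\in[\sigma/(p-1),\bar\gamma)$, monotone convergence yields $\int_M V|u|^{\sigma+\gamma}\,d\mu=0$, and since $V>0$ almost everywhere in $M$ one concludes $u\equiv 0$, as desired.
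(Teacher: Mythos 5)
Your overall strategy (plug a two--parameter family of cutoffs into an a priori estimate, invoke the weighted volume hypothesis \eqref{31}, and pass to the limit) is the right one, but as written the proposal has genuine gaps and, where it is specific, it is internally inconsistent. First, you route the argument through Proposition \ref{PR:2}. The right-hand side of \eqref{EQ:10ter} contains the factor $\bigl[\int_{M\setminus K}V|u|^{\sigma+\bar\gamma}\,d\mu\bigr]^{\frac{\gamma+p-1}{\sigma+\bar\gamma}}$, which is useless (indeed possibly infinite) unless one already knows $\int_MV|u|^{\sigma+\bar\gamma}\,d\mu<\infty$; that finiteness is itself a nontrivial a priori bound which must first be extracted from Proposition \ref{PR:1} (this is exactly how the paper treats the borderline case $b=-1$ of Theorem \ref{thm1}). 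The actual proof of Theorem \ref{thm2} never uses Proposition \ref{PR:2}: under {\bf (HP2)} the exponential gain is strong enough that Proposition \ref{PR:1} alone, with the power-law-times-linear cutoffs $\psi_n=\varphi\eta_n$ of \eqref{22}, closes the argument. Second, the prefactor produced by the propositions is $\delta^{-p\frac{\sigma+\gamma}{\sigma-p+1}}$, which blows up as $\delta\to0$; it is not $\delta^{\kappa}$ with $\kappa>0$ as in your schematic bound.

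The more serious problem is the parameter coupling, which is where the theorem is actually decided and which you explicitly leave unproved. The exponent $\epsilon$ with which $V^{-\beta+\epsilon}$ appears is not a free parameter: the estimates carry $V^{-\frac{\gamma+p-1}{\sigma-p+1}}=V^{-\beta+\frac{\delta}{\sigma-p+1}}$, so $\epsilon$ is forced to equal $\frac{\delta}{\sigma-p+1}$ with $\delta=\bar\gamma-\gamma$. Hence you cannot simultaneously ``fix $\gamma$'' and send $\epsilon\to0^+$ like $1/\log\log R$; and if you do tie $\epsilon$ to $\delta$ and let $\delta\to0$, then $\gamma\to\bar\gamma$ and the final passage requires Fatou's lemma applied to $V|u|^{\sigma+\bar\gamma-\delta}$, not monotone convergence at fixed $\gamma$. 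Moreover, with your rate $\epsilon\sim1/\log\log R$ the factor $e^{-\theta\epsilon\log\frac{R}{2}\log\log\frac{R}{2}}$ decays like a negative power of $R$ and would annihilate $(\log R)^{b}$ for every $b\in\R$, so the balancing you describe cannot be ``exactly the threshold'' $b<-1+\frac{\theta}{\sigma-p+1}$: your mechanism does not explain where that hypothesis enters. The paper's choice is $\delta=\frac{1}{\log R}$ (so $R^{\delta}=e$ and all powers $R^{C\delta}$ are harmless), for which the exponential gain is only of order $(\log R)^{-\frac{\theta}{\sigma-p+1}}$ while the prefactor contributes $\delta^{-b-1}=(\log R)^{b+1}$; the hypothesis $b+1<\frac{\theta}{\sigma-p+1}$ is precisely what makes the product vanish. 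Until you carry out this computation, including the radial integral estimate \eqref{37} needed for the tail of $|\nabla\varphi|$ (or its analogue for your logarithmic cutoff, summed over the roughly $\log T$ dyadic annuli covering $B_R\setminus B_{R/T}$), the proof is not complete.
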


By Theorem \ref{thm2} we immediately have the following corollary.
\begin{cor}\label{cor1}
  If for some positive constants $C$, $C_0$, $\theta$ one
has
\begin{equation}\label{41}
0<V(x)\leq C(1+r(x))^{C_0}e^{-\theta r(x)}\qquad\text{ in }M
\end{equation}
and \eqref{30} for some $b\in\R$ and every $R>0$ large enough.  Let $u$ be a stable solution of equation \eqref{eq-1}. Then
$u\equiv 0$ in $M$.
\end{cor}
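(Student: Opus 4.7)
The plan is to deduce the corollary directly from Theorem \ref{thm2} by showing that hypotheses \eqref{41} and \eqref{30} together imply condition \textbf{(HP2)}. The only freedom we really need to exploit is that in \textbf{(HP2)} the parameter $\theta$ is not fixed in advance: any $\theta>0$ will do, provided the corresponding $b$ satisfies $b<-1+\theta/(\sigma-p+1)$. Since the $b$ in \eqref{30} is a given real number, we will simply choose $\theta$ in \textbf{(HP2)} much larger than $(\sigma-p+1)(b+1)$.

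First I would split the integrand as $V^{-\beta+\epsilon}=V^{-\beta}\cdot V^{\epsilon}$ and use the pointwise bound \eqref{41} on the factor $V^{\epsilon}$. On the annulus $B_R\setminus B_{R/2}$ we have $r(x)\geq R/2$, and $(1+r(x))^{C_0\epsilon}\leq (1+R)^{C_0\epsilon}\leq C\,R^{C_0\epsilon}$, while for $0<\epsilon\leq \epsilon_0\leq 1$ the factor $C^{\epsilon}$ is uniformly bounded. Therefore
\[
V(x)^{\epsilon}\leq C\,R^{C_0\epsilon}\,e^{-\theta_1 \epsilon R/2}\qquad\text{for all }x\in B_R\setminus B_{R/2},
\]
where $\theta_1$ denotes the parameter appearing in \eqref{41}. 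Integrating and using \eqref{30} I get
\[
\int_{B_R\setminus B_{R/2}}V^{-\beta+\epsilon}\,d\mu\leq C\,R^{\alpha+C_0\epsilon}(\log R)^{b}\,e^{-\theta_1 \epsilon R/2}
\]
for every $R$ large enough and every $\epsilon\in(0,\epsilon_0)$, with a new constant $C$ absorbing the $\epsilon$-independent quantities.

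Next I would compare the two exponential decays. Fix any $\theta_2>(\sigma-p+1)(b+1)$, so that $b<-1+\theta_2/(\sigma-p+1)$. Since $R$ grows strictly faster than $\log(R/2)\log\log(R/2)$, there exists $R_1\geq R_0$ such that
\[
\frac{\theta_1}{2}R\;\geq\;\theta_2\,\log\tfrac{R}{2}\,\log\log\tfrac{R}{2}\qquad\text{for all }R\geq R_1.
\]
Multiplying by $\epsilon$ and exponentiating gives $e^{-\theta_1\epsilon R/2}\leq e^{-\theta_2\epsilon \log(R/2)\log\log(R/2)}$ for $R\geq R_1$, whence
\[
\int_{B_R\setminus B_{R/2}}V^{-\beta+\epsilon}\,d\mu\leq C\,R^{\alpha+C_0\epsilon}(\log R)^{b}\,e^{-\theta_2\epsilon\log(R/2)\log\log(R/2)}
\]
for all $R\geq R_1$ and all $\epsilon\in(0,\epsilon_0)$. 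This is precisely \eqref{31} with the chosen $\theta=\theta_2$ and $b$ as in \eqref{30}. Hence condition \textbf{(HP2)} is verified, and Theorem \ref{thm2} yields $u\equiv 0$ in $M$.

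There is no real obstacle: the exponential decay of $V$ in \eqref{41} is vastly stronger than the subexponential factor $e^{-\theta\epsilon\log R\log\log R}$ required in \textbf{(HP2)}, which lets us tune $\theta_2$ as large as we please in order to satisfy the constraint $b<-1+\theta_2/(\sigma-p+1)$ regardless of the given value of $b$.
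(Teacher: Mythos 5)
Your argument is correct and is exactly the verification the paper leaves implicit when it says the corollary follows ``immediately'' from Theorem \ref{thm2}: bound $V^{\epsilon}$ on the annulus using \eqref{41}, absorb the resulting factor $e^{-\theta_1\epsilon R/2}$ into $e^{-\theta_2\epsilon\log(R/2)\log\log(R/2)}$ for an arbitrarily large $\theta_2$, and thereby satisfy the constraint $b<-1+\theta_2/(\sigma-p+1)$ for any given $b$. No issues.
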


\begin{exe}\label{ex1}{\em
In the case $M=\R^m$ and $V\equiv1$, condition {\bf (HP1)} is satisfied if and only if
  \begin{equation}\label{3}
  m<\alpha=\alpha(\sigma)=p\frac{\sigma+\overline\gamma(\sigma)}{\sigma-p+1},
   \end{equation}
 with $\overline\gamma(\sigma)$ defined in \eqref{defgamma}. It is easy to see that $\alpha(\sigma)$ is a strictly decreasing function for $\sigma\in(p-1,\infty)$, with $$\lim_{\sigma\rightarrow (p-1)^+}\alpha(\sigma)=+\infty,\qquad\lim_{\sigma\rightarrow+\infty}\alpha(\sigma)=\frac{p(p+3)}{p-1}.$$
Therefore, if $m\leq\frac{p(p+3)}{p-1}$, condition \eqref{3} is satisfied for every $\sigma>p-1$. On the other hand, if
 $m>\frac{p(p+3)}{p-1}$, there exists a unique $\sigma^*\in(p-1,\infty)$ such that $\alpha(\sigma^*)=m$ and condition \eqref{3} is satisfied if and only if $\sigma\in(p-1,\sigma^*)$. An easy computation shows that
 \[  \sigma^* = \sigma_c(m, p),\]
 with $\sigma_c(m, p)$ defined in \eqref{e5i}.
Hence if $V\equiv1$ and \eqref{e1i} hold, from Theorem \ref{thm1} it follows that
 every weak solution $u$ of equation \eqref{eq-1} which is stable in $\R^n$
vanishes identically. Note that the result is in accordance with \cite[Theorem 1.5]{DFSV} for $p>2$ and with \cite[Theorem 1]{FA2} for $p=2$\,. Moreover, for $p=2$ in \cite[Theorem1]{FA2} it is shown that the result is sharp, in the sense that if the requirement on the parameter $\sigma$ is not fulfilled, then a stable solution exists.}
\end{exe}

\begin{rem}
Example \ref{ex1} shows that the exponent $\alpha$ introduced in \eqref{alfabeto} is sharp in condition {\bf (HP1)}, in order to obtain nonexistence of nontrivial stable solutions of \eqref{eq-1}. In particular, see Remark \ref{rem1}, the exponent $\alpha$ on the radius $R$ is sharp in the weighted volume growth assumption \eqref{2}, for the class of potential functions $V$ satisfying \eqref{1}.

On the other hand, Theorem \ref{thm2} and Corollary \ref{cor1} show that the exponent $-1$ on the logarithm of the radius $R$ in the weighted volume growth condition \eqref{2} is not sharp in general and can be increased, if one strengthens the assumptions on the potential $V$, in
particular imposing hypotheses on its behavior at infinity such as \eqref{29} or \eqref{41}.
\end{rem}

\section{Auxiliary results}\label{ARes}
In this Section we prove the following two propositions, that will have a crucial role in the proof of Theorems \ref{thm1} and \ref{thm2}\,.
\begin{prop}\label{PR:1}
 Let $u\in W^{1,p}_{\rm loc}(M)\cap
L^\sigma_{\rm loc}(M)$ be a \emph{stable} solution of \eqref{EQ:1}
with $\sigma
>(p-1)$ and $p\geq2$. Let also $\bar\gamma$, $\delta_\gamma$, $\gamma$ be as in \eqref{defgamma} and \eqref{defdelta}
and consider $k$ such that
$$
k \geq \max\left\{\frac{p+\gamma}{\sigma-p+1}\,, \, 2\right\}\,.
$$
Then
there exists a positive constant $c=c(p,\sigma,k)$ such that
\begin{equation}\label{EQ:10bis}
\int_M V(x)\,|u|^{\sigma+\gamma} \psi^{pk} \,d\mu\leq
c\,\delta^{-p\frac{\gamma+\sigma}{\sigma-p+1}}
\int_MV(x)^{-\frac{\gamma+p-1}{\sigma-p+1}} |\nabla
\psi|^{p\frac{\sigma+\gamma}{\sigma-p+1}}\,d\mu
\end{equation}
and
\begin{equation}\label{EQ:11tris}
\begin{split}
&\int_M |\nabla u|^p |u|^{\gamma-1} \psi^{pk}\,d\mu \leq
 c\,\delta^{-p\frac{\gamma+\sigma}{\sigma-p+1}}
 \int_MV(x)^{-\frac{\gamma+p-1}{\sigma-p+1}} |\nabla \psi|^{p\frac{\sigma+\gamma}{\sigma-p+1}}\,d\mu
\end{split}
\end{equation}
for all test functions $\psi \in \operatorname{Lip}_c(M)$ with $0\leq \psi \leq 1$.
\end{prop}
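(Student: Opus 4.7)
The plan is to follow the stability-based scheme of Farina \cite{FA2} and Dupaigne--Farina--Sirakov--Valdinoci \cite{DFSV}, adapted to the Riemannian setting. First I would insert $\varphi_{1}:=|u|^{\gamma-1}u\,\psi^{pk}$ into the weak equation \eqref{EQ:1}; expanding $\nabla\varphi_{1}=\gamma|u|^{\gamma-1}\psi^{pk}\nabla u+pk|u|^{\gamma-1}u\,\psi^{pk-1}\nabla\psi$ produces the identity
\[
A=\gamma B+pk\,I,
\]
with $A:=\int_{M}V|u|^{\sigma+\gamma}\psi^{pk}\,d\mu$, $B:=\int_{M}|u|^{\gamma-1}|\nabla u|^{p}\psi^{pk}\,d\mu$ and $I:=\int_{M}|u|^{\gamma-1}u\,|\nabla u|^{p-2}\psi^{pk-1}(\nabla u,\nabla\psi)\,d\mu$. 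Next I would plug $\varphi_{2}:=|u|^{(\gamma-1)/2}u\,\psi^{pk/2}$ into the stability inequality of Definition~\ref{defst}; expanding $|\nabla\varphi_{2}|^{2}$ and invoking the pointwise Cauchy--Schwarz bound $|\nabla u|^{p-4}(\nabla u,\nabla\varphi_{2})^{2}\leq|\nabla u|^{p-2}|\nabla\varphi_{2}|^{2}$ (valid since $p\geq 2$) yields
\[
\sigma A\leq(p-1)\Bigl[\tfrac{(\gamma+1)^{2}}{4}B+\tfrac{(pk)^{2}}{4}C+\tfrac{(\gamma+1)pk}{2}I\Bigr],
\]
with $C:=\int_{M}|u|^{\gamma+1}|\nabla u|^{p-2}\psi^{pk-2}|\nabla\psi|^{2}\,d\mu$.

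Using $pk\,I=A-\gamma B$ to eliminate $I$, and then $B=(A-pk\,I)/\gamma$ to collect all $A$-terms on the left, one arrives at the key inequality
\[
\frac{(p-1)\delta(\gamma-\gamma_{-})}{4\gamma}\,A\leq\frac{(p-1)(\gamma^{2}-1)\,pk}{4\gamma}\,I+(p-1)\frac{(pk)^{2}}{4}\,C,
\]
where $\gamma_{-}$ denotes the smaller root of the quadratic $(p-1)X^{2}-[4\sigma-2(p-1)]X+(p-1)$, whose larger root is $\bar\gamma$. The coefficient on the left equals $\sigma/(p-1)-(\gamma+1)^{2}/(4\gamma)$ and factorizes as $(p-1)\delta(\gamma-\gamma_{-})/(4\gamma)$; it vanishes precisely as $\gamma\uparrow\bar\gamma$, which is the source of the explicit $\delta$-factor advertised in \eqref{EQ:10bis}. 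Bounding $|I|\leq\sqrt{BC}$ and combining with $B\leq 2A/\gamma+(pk)^{2}C/\gamma^{2}$ (which follows from the equation identity by one Young step on $pk\sqrt{BC}$), then applying Young to $\sqrt{AC}$ with parameter of order $\delta$, gives the preliminary estimate $A\leq c\,\delta^{-2}C$, with $c=c(p,\sigma,k)$.

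Finally, to convert $C$ into the target integral $M:=\int_{M}V^{-\beta}|\nabla\psi|^{\alpha}\,d\mu$, with $\alpha:=p(\sigma+\gamma)/(\sigma-p+1)$ and $\beta:=(\gamma+p-1)/(\sigma-p+1)$, I would apply a three-factor H\"older inequality with weights $a:=(p-2)/p$, $b:=2(\gamma+p-1)/[p(\sigma+\gamma)]$, $d:=2(\sigma-p+1)/[p(\sigma+\gamma)]$; these satisfy $a+b+d=1$, and a direct computation matches the exponents of $|u|$, $|\nabla u|$, $V$ and $|\nabla\psi|$ in the integrand of $C$ against those of $[|u|^{\gamma-1}|\nabla u|^{p}\psi^{pk}]^{a}[V|u|^{\sigma+\gamma}\psi^{pk}]^{b}[V^{-\beta}|\nabla\psi|^{\alpha}]^{d}$, while the residual $\psi$-exponent $pk-\alpha$ is non-negative under the assumption on $k$ and is discarded using $\psi\leq 1$. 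This yields $C\leq B^{a}A^{b}M^{d}$; inserting $B\leq c(A+C)\leq c\,C/\delta^{2}$ and $A\leq c\,C/\delta^{2}$ and solving for $C$ gives $C\leq c\,\delta^{-(\alpha-2)}M$, hence $A\leq c\,\delta^{-\alpha}M=c\,\delta^{-p(\sigma+\gamma)/(\sigma-p+1)}M$, which is exactly \eqref{EQ:10bis}; the bound \eqref{EQ:11tris} follows from $B\leq c(A+C)$ with the same $\delta$-power. The main obstacle is the careful bookkeeping: the three-factor H\"older must be arranged so that, combined with the preliminary $A\leq c\,\delta^{-2}C$, it delivers the precise exponent $-p(\sigma+\gamma)/(\sigma-p+1)$ of $\delta$ rather than a larger one; this is forced by the identity $2(1-d)/d=\alpha-2$.
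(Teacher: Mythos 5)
Your proposal is correct and reaches both \eqref{EQ:10bis} and \eqref{EQ:11tris} with the right powers of $\delta$, and it rests on the same two pillars as the paper's proof: the test function $|u|^{\gamma-1}u\times(\text{cutoff})$ in the weak formulation, the test function $|u|^{(\gamma-1)/2}u\times(\text{cutoff})$ in the stability inequality, and the fact that $\tfrac{\sigma}{p-1}-\tfrac{(\gamma+1)^2}{4\gamma}$ vanishes linearly at $\gamma=\bar\gamma$ (your factorization through the conjugate root $\gamma_-=1/\bar\gamma$ is exactly the paper's $g(\gamma)$, made explicit). The bookkeeping in the middle is genuinely different, though. The paper applies Young's inequality with a parameter $\varepsilon\sim\delta$ \emph{immediately}, so that every gradient-of-cutoff term is absorbed into the single quantity $\int|\nabla\varphi|^p|u|^{\gamma+p-1}$; this yields $A\leq c\,\delta^{-p}\int|\nabla\varphi|^p|u|^{\gamma+p-1}$ and then a single two-factor weighted H\"older plus absorption of $A^{(\gamma+p-1)/(\sigma+\gamma)}$ gives the final exponent $\delta^{-p(\sigma+\gamma)/(\sigma-p+1)}$. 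You instead keep the exact identity $A=\gamma B+pkI$ and the exact stability bound, eliminate $I$ and $B$ algebraically, and land on the intermediate estimate $A\leq c\,\delta^{-2}C$ with $C=\int|\nabla u|^{p-2}|u|^{\gamma+1}\psi^{pk-2}|\nabla\psi|^2\,d\mu$, which you then control by a three-factor H\"older against $B$, $A$ and the target integral; the identity $2(1-d)/d=\alpha-2$ does deliver the same final power, and \eqref{EQ:11tris} follows from $B\leq c(A+C)$. Your route buys a more transparent accounting of where the $\delta$ comes from (the explicit root $\gamma_-$) at the cost of a slightly more delicate bootstrap (you need $A\lesssim\delta^{-2}C$ and $B\lesssim\delta^{-2}C$ simultaneously before solving for $C$); the paper's route is shorter because the early Young step collapses everything onto one intermediate integral. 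Two shared caveats, neither of which distinguishes you from the paper: both arguments divide by a (power of a) quantity ($A$ for the paper, $C$ for you) whose finiteness must be secured beforehand by truncation/density, which the paper only gestures at; and the residual $\psi$-exponent in your H\"older step requires $k\geq\frac{\sigma+\gamma}{\sigma-p+1}$, which is the same condition the paper actually uses in \eqref{16bis}.
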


\begin{prop}\label{PR:2}
Under the same assumptions of Proposition \ref{PR:1}, for every
$\delta>0$ small enough one has
\begin{equation}\label{EQ:10ter}
\begin{aligned}
&\int_M V(x)\,|u|^{\sigma+\gamma} \psi^{pk} \,d\mu\\
&\quad\leq c\left[\int_{M\setminus K}
V(x)\,|u|^{\sigma+\overline\gamma}\,d\mu\right]^\frac{\gamma+p-1}{\sigma+\overline\gamma}
\left[\delta^{-p\frac{\sigma+\overline\gamma}{\sigma-p+1+\delta}}\int_{M\setminus
K}V(x)^{-\frac{\gamma+p-1}{\sigma-p+1+\delta}} |\nabla
\psi|^{p\frac{\sigma+\overline\gamma}{\sigma-p+1+\delta}}\,d\mu\right]^\frac{\sigma-p+1+\delta}{\sigma+\overline\gamma}
\end{aligned}
\end{equation}
for some positive constant $c$, where $K=\{x\in M\,|\,\psi(x)=1\}$.
\end{prop}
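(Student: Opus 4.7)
The plan is to adapt the argument producing \eqref{EQ:10bis} in Proposition \ref{PR:1}, but to replace the last application of Young's inequality by H\"older's inequality with exponents tuned to split off the factor $\int_{M\setminus K}V(x)|u|^{\sigma+\overline\gamma}\,d\mu$.

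First, I would retrace the proof of Proposition \ref{PR:1} and stop just \emph{before} the final Young step, isolating an intermediate estimate of the form
\begin{equation*}
\int_M V(x)|u|^{\sigma+\gamma}\psi^{pk}\,d\mu \,\le\, c\,\delta^{-p}\int_M|u|^{\gamma+p-1}|\nabla\psi|^p\psi^{p(k-1)}\,d\mu,
\end{equation*}
with $c=c(p,\sigma,k)$. This is what one obtains by combining the stability inequality tested against $u|u|^{(\gamma-1)/2}\psi^k$ with the equation \eqref{EQ:1} tested against $u|u|^{\gamma-1}\psi^{pk}$, and then using Young once to absorb the gradient term $\int|\nabla u|^p|u|^{\gamma-1}\psi^{pk}\,d\mu$, while keeping the polynomial $|u|^{\gamma+p-1}|\nabla\psi|^p$ intact on the right. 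Consistency with \eqref{EQ:10bis} can be checked a posteriori: performing a second Young step on the right-hand side with parameter $\varepsilon\sim\delta^p$ produces precisely the factor $\delta^{-p(\gamma+p-1)/(\sigma-p+1)}$, whose product with the intermediate $\delta^{-p}$ gives $\delta^{-p(\sigma+\gamma)/(\sigma-p+1)}$, matching \eqref{EQ:10bis}.

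Second, instead of this last Young step, I split the integrand multiplicatively as
$$|u|^{\gamma+p-1}|\nabla\psi|^p\psi^{p(k-1)} = \bigl[V(x)|u|^{\sigma+\overline\gamma}\bigr]^{\mu}\cdot V(x)^{-\mu}|\nabla\psi|^p\psi^{p(k-1)},\qquad \mu:=\frac{\gamma+p-1}{\sigma+\overline\gamma},$$
the choice of $\mu$ being forced by the requirement $\mu(\sigma+\overline\gamma)=\gamma+p-1$, so that the $|u|$-power disappears from the second factor. A direct computation yields $1-\mu=(\sigma-p+1+\delta)/(\sigma+\overline\gamma)$, hence $1/(1-\mu)=(\sigma+\overline\gamma)/(\sigma-p+1+\delta)$, exactly the exponent of the outer bracket on the right of \eqref{EQ:10ter}. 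Applying H\"older's inequality with conjugate exponents $1/\mu$ and $1/(1-\mu)$ then produces $\int_{M\setminus K}V|u|^{\sigma+\overline\gamma}\,d\mu$ from the first factor (the integration is automatically restricted to $M\setminus K$ since $|\nabla\psi|=0$ on $K$) and, from the second factor, $V^{-(\gamma+p-1)/(\sigma-p+1+\delta)}|\nabla\psi|^{p(\sigma+\overline\gamma)/(\sigma-p+1+\delta)}$ together with a nonnegative power of $\psi$, which I discard using $\psi\le 1$. Finally, the prefactor $c\,\delta^{-p}$ rewrites as $c\bigl(\delta^{-p(\sigma+\overline\gamma)/(\sigma-p+1+\delta)}\bigr)^{(\sigma-p+1+\delta)/(\sigma+\overline\gamma)}$ and is absorbed into the second bracket, yielding exactly \eqref{EQ:10ter}.

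The main obstacle is Step 1: one must revisit the proof of Proposition \ref{PR:1} carefully enough to isolate the intermediate bound with the precise $\delta^{-p}$ prefactor, \emph{before} rather than \emph{after} the last Young step. Once that estimate is in hand, the remainder is essentially bookkeeping, since $\mu$ is uniquely determined by demanding the cancellation of $|u|$ in the second H\"older factor, and all other exponents are then forced.
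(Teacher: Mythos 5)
Your proposal is correct and follows essentially the same route as the paper: the intermediate estimate you isolate in Step~1 is exactly the paper's inequality \eqref{EQ:7} (already available from the proof of Proposition \ref{PR:1}), and the paper likewise restricts the right-hand side to $M\setminus K$ using $\nabla\psi=0$ a.e.\ on $K$ and then applies H\"older with the conjugate exponents $\frac{\sigma+\overline\gamma}{\gamma+p-1}$ and $\frac{\sigma+\overline\gamma}{\sigma-p+1+\delta}$, absorbing $\delta^{-p}$ into the second bracket exactly as you describe.
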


Proposition \ref{PR:1} provides an important estimate on the
integrability of $u$ and $\nabla u$. As we will see, our
nonexistence results will follow by showing that the right-hand
sides vanish, under suitably weighted volume growth assumptions on
geodesic balls or annuli.

\begin{proof}[Proof of Proposition \ref{PR:1}]
\textbf{Step 1}. For any non-negative $\varphi\in
\operatorname{Lip}_c (\Omega)$, by density arguments, we can plug
$$\Phi = |u|^{\gamma-1}u \varphi^p$$ in \eqref{EQ:1} and get
\begin{equation*}
\gamma \int_M |\nabla u|^{p} |u|^{\gamma-1} \varphi^p\,d\mu
\,=\,-p\int_M |\nabla u|^{p-2}(\nabla u,\nabla \varphi) |u|^{\gamma
-1}u\varphi^{p-1}\,d\mu+\int_M
V(x)|u|^{\sigma+\gamma}\varphi^p\,d\mu
\end{equation*}
so that
\begin{equation*}
\gamma\int_M|\nabla u|^{p}|u|^{\gamma-1} \varphi^p\,d\mu\, \leq\, p
\int_M |\nabla u|^{p-1}|\nabla \varphi||u|^\gamma
\varphi^{p-1}\,d\mu+\int_M V(x)|u|^{\sigma+\gamma}\varphi^p\,d\mu
.\end{equation*} Writing
$|u|^\gamma=|u|^{[(\gamma-1)\frac{p-1}{p}+\frac{\gamma +(p-1)}{p}]}$
and exploiting  Young's inequality with exponents $p$ and
$\frac{p}{p-1}$ we have
\begin{align*}
\gamma\int_M|\nabla u|^p |u|^{\gamma-1} \varphi^p\,d\mu \leq&
\epsilon \int_M|\nabla u|^p |u|^{\gamma-1} \varphi^p\,d\mu \\&+
\frac{c(p)}{\varepsilon^{p-1}} \int_M |\nabla \varphi|^p
|u|^{\gamma+p-1}\,d\mu+\int_M V(x)
|u|^{\sigma+\gamma}\varphi^p\,d\mu
\end{align*}
that we rewrite as
\begin{equation}\label{EQ:4}
\begin{aligned}
(\gamma-\varepsilon)\int_M|\nabla u|^p |u|^{\gamma-1}
\varphi^p\,d\mu \leq \frac{c(p)}{\varepsilon^{p-1}} \int_M |\nabla
\varphi|^p |u|^{\gamma+(p-1)}\,d\mu+\int_M V(x)
|u|^{\sigma+\gamma}\varphi^p\,d\mu\,\,.
\end{aligned}
\end{equation}

\noindent \textbf{Step 2}.
Now we exploit the stability of $u$ with the choice of test function
$$\tilde{\Phi}=|u|^\frac{\gamma-1}{2}u\varphi^{\frac{p}{2}}$$
and  we get
\begin{equation}\label{EQ:7bis}
\begin{aligned}
\frac{\sigma}{p-1}\int_{M} V(x)\,|u|^{\sigma+\gamma} \varphi^p \leq&
\left(\frac{1+\gamma}{2}\right)^2 \int_{M} |\nabla
u|^{p}|u|^{\gamma-1}\varphi^p +\frac{p^2}{4}\int_{M}|\nabla
u|^{p-2}|u|^{\gamma+1}|\nabla \varphi|^2
\varphi^{p-2}\\&+p\frac{\gamma+1}{2}\int_{M}|\nabla
u|^{p-1}|u|^{\gamma} \varphi^{p-1} |\nabla \varphi| .
\end{aligned}
\end{equation}
In case $p>2$ we write $|u|^{\gamma +1}=|u|^{[(\gamma
-1)\frac{(p-2)}{p}+2\frac{(\gamma +(p-1))}{p}]}$ and using Young's
inequality with exponents $\frac{p}{p-2}$ and $\frac{p}{2}$ we get
\begin{equation}\label{43}
\begin{aligned}
&\frac{p^2}{4}\int_{M}  |\nabla u|^{p-2}|u|^{\gamma+1}
\varphi^{p-2}|\nabla \varphi|^2 \leq \frac{\epsilon}{2} \int_{M}
|\nabla u|^{p}|u|^{\gamma-1} \varphi^p
+\frac{c(p)}{\varepsilon^{\frac{p-2}{2}}} \int_{M} |\nabla
\varphi|^p |u|^{\gamma+(p-1)} .
\end{aligned}
\end{equation}
Also, by writing
$|u|^{\gamma}=|u|^{(\gamma-1)\frac{p-1}{p}+\frac{\gamma+(p-1)}{p}}$
and using Young's inequality with exponents $\frac{p}{p-1}$ and
 $p$, we obtain
\begin{equation}\label{42}
p\frac{\gamma+1}{2}\int_{M} |\nabla
u|^{p-1}|u|^{\gamma}\varphi^{p-1}|\nabla \varphi| \,\leq\,
\frac{\epsilon}{2} \int_{M} |\nabla u|^p \varphi^p |u|^{\gamma-1}
+\frac{c(p)}{\varepsilon^{p-1}} \int_{M} |\nabla \varphi|^p
|u|^{\gamma+(p-1)}.
\end{equation}
Recollecting and exploiting  \eqref{EQ:7bis}, we get
\begin{equation}\label{EQ:8bis}
\begin{split}
\frac{\sigma}{p-1}\int_{M} V(x)\,|u|^{\sigma+\gamma} \varphi^p
\,\leq\,&
\left[\left(\frac{\gamma+1}{2}\right)^2+\epsilon\right]\int_{M}|\nabla
u|^p |u|^{\gamma-1} \varphi^p+\\
&+c(p)\left
(\frac{1}{\varepsilon^{\frac{p-2}{2}}}+\frac{1}{\varepsilon^{p-1}}
\right )\int_{M} |\nabla \varphi|^p |u|^{\gamma+(p-1)} .\end{split}
\end{equation}
By \eqref{EQ:4} and \eqref{EQ:8bis},
\begin{equation}\label{44}
\begin{aligned}
\left(\frac{\sigma}{p-1}\right)\int_{M}V(x)\,
|u|^{\sigma+\gamma}\varphi^p \leq&
\left[\left(\frac{\gamma+1}{2}\right)^2
+\epsilon\right]\frac{1}{(\gamma-\epsilon)}
\int_{M}V(x)\, |u|^{\sigma+\gamma}\varphi^p\\
&\qquad+c(p,\gamma)\left
(\frac{1}{\varepsilon^{\frac{p-2}{2}}}+\frac{1}{\varepsilon^{p-1}}
\right ) \int_{M} |\nabla \varphi|^p |u|^{\gamma+(p-1)}\,.
\end{aligned}
\end{equation}

In case $p=2$ one does not need inequality \eqref{43}. Indeed, starting from \eqref{EQ:7bis} and using \eqref{EQ:4} and \eqref{42} one can easily see that \eqref{44} holds also when $p=2$.

Since
 $c(p,\gamma)$ is bounded away from zero and from infinity in the range of $\gamma$ that we are considering, from now on we omit the dependence on $\gamma$.
Setting
\begin{equation}\label{9bis} \omega=\omega_\epsilon
=\frac{\sigma}{p-1}-\left(\frac{(\gamma+1)^2}{4}
+\epsilon\right)\frac{1}{\gamma-\epsilon}
\end{equation}
and considering with no loss of generality bounded values of
$\varepsilon$, e.g. $0<\varepsilon\leq2^{-1}$, we arrive at
\begin{equation}\label{EQ:9bis}
\omega \int_{M} V(x)\,|u|^{\sigma+\gamma} \varphi^{p} \leq
\frac{c(p)}{\varepsilon^{p-1}} \int_{M} |\nabla \varphi|^{p}
|u|^{\gamma+(p-1)}
\end{equation}
for any non-negative $\varphi\in \operatorname{Lip}_c (M)$. It is
now convenient to set
\[
g(\gamma)\,:=\,\frac{\sigma}{p-1}-\frac{(\gamma+1)^2}{4\gamma}.
\]
By direct computation it follows that
 $g(\bar\gamma)=0$ and $g'(\bar\gamma)<0$ for $\gamma>1$. Therefore, for the range of $\delta$ we are considering, recalling that $\delta=\bar\gamma-\gamma$, we infer that
 $g'(\gamma)$ is bounded away from zero.
  Now,  by the mean value theorem, we deduce that
\begin{equation}\nonumber
\begin{split}
\omega_\epsilon &=\frac{\sigma}{p-1}-\left(\frac{(\gamma+1)^2}{4}
+\epsilon\right)\frac{1}{\gamma-\epsilon}\\
&=-(g(\bar\gamma)-g(\gamma))-\frac{(\gamma+1)^2}{4}\cdot\frac{\varepsilon}{(\gamma-\varepsilon)\gamma}-\frac{\varepsilon}{(\gamma-\varepsilon)}\\
&\geq 2\tilde c(p,\sigma)\delta-\varepsilon \left(   \frac{(\gamma+1)^2}{2\gamma}+2 \right)\,,
\end{split}
\end{equation}
where we exploited  the fact that we are assuming e.g.  $0<\varepsilon\leq2^{-1}$ and by assumption $\gamma>1$, so that $\gamma-\varepsilon\geq 2^{-1}$.
Taking into account \eqref{defgamma}, we deduce that
\begin{equation}\nonumber
\omega_\varepsilon >\tilde c(p,\sigma)\delta\qquad\text{for }\quad
\varepsilon =\tilde c(p,\sigma)\cdot \left(
\frac{(\gamma+1)^2}{2\gamma}+2  \right)^{-1} \delta\,.
\end{equation}

Therefore \eqref{EQ:9bis} can be rewritten as
\begin{equation}\label{EQ:9bis xvxv}
 \int_{M} V(x)\,|u|^{\sigma+\gamma}
\varphi^{p} \leq \frac{c(p,\sigma)}{\delta^p} \int_{M} |\nabla
\varphi|^{p} |u|^{\gamma+(p-1)}\,.
\end{equation}
Furthermore, possibly relabeling the constant,  \eqref{EQ:4}
provides
\begin{eqnarray}\label{EQ:4vvxv}
\int_{M}|\nabla u|^p |u|^{\gamma-1} \varphi^p &\leq&
\frac{c(p,\sigma)}{\delta^p}\int_{M} |\nabla \varphi|^{p}
|u|^{\gamma+(p-1)}\,.
\end{eqnarray}

\noindent\textbf{Step 3}. Let us now consider $$\psi\in
\operatorname{Lip}_c(M)$$
 with $0\leq\psi\leq1$ and take $\varphi=\psi^k$ in \eqref{EQ:9bis xvxv}. Then we obtain
\begin{equation}\label{EQ:7}
\int_{M}V(x)\, |u|^{\sigma+\gamma} \psi^{pk}\,d\mu \leq
\frac{c(p,\sigma,k)}{\delta^p}\int_{M} |u|^{\gamma+(p-1)}
\psi^{p(k-1)} |\nabla \psi|^p\,d\mu.
\end{equation}
Hence
\begin{equation*}
\begin{aligned}
\int_{M}V(x)\, |u|^{\sigma+\gamma}
\psi^{pk}\,d\mu\leq&\frac{c(p,\sigma,k)}{\delta^p} \left[\int_{M}
\left(V(x)^{\frac{\gamma+(p-1)}{\sigma+\gamma}}|u|^{\gamma+(p-1)}\psi^{p(k-1)}\right)^\frac{\sigma+\gamma}{\gamma+(p-1)}\,d\mu\right]^\frac{\gamma+(p-1)}
{\sigma+\gamma}\\&\hspace{2cm}\times\left[\int_{M}
\left(V(x)^{-\frac{\gamma+(p-1)}{\sigma+\gamma}}|\nabla
\psi|^p\right)^\frac{\sigma+\gamma}{\sigma-(p-1)}\,d\mu\right]^\frac{\sigma-(p-1)}{\sigma+\gamma},
\end{aligned}
\end{equation*}
where we used the fact that $\sigma >( p-1)$. Furthermore
\begin{equation}\label{16bis}
p(k-1)\left(\frac{\sigma+\gamma}{\gamma+(p-1)}\right) \geq p\,k
\end{equation}
since we assumed $k \geq \frac{\sigma+\gamma}{\sigma-(p-1)}$.
Consequently, recalling that $0\leq \psi \leq 1$, we have
\begin{equation*}
\begin{aligned}
&\int_{M} V(x)\,|u|^{\sigma+\gamma} \psi^{pk} \leq\\
&\qquad\frac{c(p,\sigma,k)}{\delta^p} \left[\int_{M}
V(x)\,|u|^{\sigma+\gamma}\psi^{pk}\right]^\frac{\gamma+(p-1)}{\sigma+\gamma}
\left[\int_{M}V(x)^{-\frac{\gamma+(p-1)}{\sigma-(p-1)}} |\nabla
\psi|^{p\frac{\sigma+\gamma}{\sigma-(p-1)}}\right]^\frac{\sigma-(p-1)}{\sigma+\gamma}.
\end{aligned}
\end{equation*}
Hence
\begin{equation}\nonumber
\int_{M} V(x)\,|u|^{\sigma+\gamma} \psi^{pk} \leq
c(p,\sigma,k)\delta^{-p\frac{\sigma+\gamma}{\sigma-p+1}}
\int_{M}V(x)^{-\frac{\gamma+(p-1)}{\sigma-(p-1)}} |\nabla
\psi|^{p\frac{\sigma+\gamma}{\sigma-(p-1)}}\,.
\end{equation}
that is \eqref{EQ:10bis}.

\noindent \textbf{Step 4}. Again we consider $\psi\in
\operatorname{Lip}_c(M)$ such that $0\leq\psi\leq 1$. Then we
evaluate \eqref{EQ:4vvxv} for $\varphi=\psi^k$, obtaining
\begin{equation*}
\begin{split}
&\int_{M} |\nabla u|^p |u|^{\gamma-1} \psi^{pk} \leq
\frac{c(p,\sigma,k)}{\delta^p}  \int_{M}
|u|^{\gamma+(p-1)} \psi^{p(k-1)} |\nabla \psi|^{p} \\
&\qquad\leq  \frac{c(p,\sigma,k)}{\delta^p} \left[\int_{M}
\left(V(x)^{\frac{\gamma+(p-1)}{\sigma+\gamma}}|u|^{\gamma+(p-1)}\psi^{p(k-1)}\right)^\frac{\sigma+\gamma}{\gamma+(p-1)}\right]^\frac{\gamma+(p-1)}
{\sigma+\gamma}\\
&\qquad\hspace{5cm}
\times\left[\int_{M} \left(V(x)^{-\frac{\gamma+(p-1)}{\sigma+\gamma}}|\nabla \psi|^p\right)^\frac{\sigma+\gamma}{\sigma-(p-1)}\right]^\frac{\sigma-(p-1)}{\sigma+\gamma}\\
 &\qquad\leq \frac{c(p,\sigma,k)}{\delta^p} \left[\int_{M} \left(V(x)|u|^{\sigma+\gamma}\psi^{pk}\right)\right]
 ^\frac{\gamma+(p-1)}{\sigma+\gamma}
\left[\int_{M}
\left(V(x)^{-\frac{\gamma+(p-1)}{\sigma+\gamma}}|\nabla
\psi|^p\right)^\frac{\sigma+\gamma}{\sigma-(p-1)}\right]^\frac{\sigma-(p-1)}{\sigma+\gamma}
\end{split}
\end{equation*}
and so,
taking into account \eqref{EQ:10bis}, we  get \eqref{EQ:11tris}, namely
\begin{equation}\nonumber
\begin{split}
&\int_{M} |\nabla u|^p |u|^{\gamma-1} \psi^{pk} \leq
 \frac{c(p,\sigma,k)}{\delta^p} \left[\frac{c(p,\sigma,k)}{\delta^{p\frac{\sigma+\gamma}{\sigma-p+1}}}\right]
 ^\frac{\gamma+(p-1)}{\sigma+\gamma}
 \int_{M}V(x)^{-\frac{\gamma+(p-1)}{\sigma-(p-1)}} |\nabla \psi|^{p\frac{\sigma+\gamma}{\sigma-(p-1)}}\,.
\end{split}
\end{equation}
\end{proof}

\begin{proof}[Proof of Proposition \ref{PR:2}]
Note that, since we are under the same assumptions of Proposition
\ref{PR:1}, \eqref{EQ:7} holds. Since $\nabla\psi=0$ a.e. in $K$ we
have
\begin{equation*}
\int_{M}V(x)\, |u|^{\sigma+\gamma} \psi^{pk}\,d\mu \leq
\frac{c}{\delta^p}\int_{M\setminus K} |u|^{\gamma+(p-1)}
\psi^{p(k-1)} |\nabla \psi|^p\,d\mu.
\end{equation*}

We now apply H\"{o}lder's inequality to obtain
\begin{equation*}
\begin{aligned}
&\int_{M}V(x)\, |u|^{\sigma+\gamma} \psi^{pk}\,d\mu \\
&\quad\leq \frac{c}{\delta^p}\left[\int_{M\setminus K}V
|u|^{\sigma+\overline\gamma}
\psi^{p(k-1)\frac{\sigma+\overline\gamma}{\gamma+p-1}}\,d\mu\right]^\frac{\gamma+p-1}{\sigma+\overline\gamma}
\left[\int_{M\setminus K}
|\nabla\psi|^{p\frac{\sigma+\overline\gamma}{\sigma-p+1+\delta}}V^{-\frac{\gamma+p-1}{\sigma-p+1+\delta}}\,d\mu\right]^\frac{\sigma-p+1+\delta}{\sigma+\overline\gamma}\\
&\quad\leq c\left[\int_{M\setminus K}V
|u|^{\sigma+\overline\gamma}\,d\mu\right]^\frac{\gamma+p-1}{\sigma+\overline\gamma}
\left[\delta^{-p\frac{\sigma+\overline\gamma}{\sigma-p+1+\delta}}\int_{M\setminus
K}
|\nabla\psi|^{p\frac{\sigma+\overline\gamma}{\sigma-p+1+\delta}}V^{-\frac{\gamma+p-1}{\sigma-p+1+\delta}}\,d\mu\right]^\frac{\sigma-p+1+\delta}{\sigma+\overline\gamma},
\end{aligned}
\end{equation*}
since $0\leq\psi\leq1$, that is the conclusion.
\end{proof}

\medskip

\section{Proof of Theorems \ref{thm1} and \ref{thm2}}\label{PrMRes}

\begin{proof}[Proof of Theorem \ref{thm1}] We start by considering the case when there exist positive
constants $C$, $C_0$ such that for every small enough $\epsilon>0$
and every large enough $R>0$ one has
\begin{equation}\label{23}
\int_{B_R\setminus B_{R/2}}V^{-\beta+\epsilon}\,d\mu \leq
CR^{\alpha+C_0\epsilon}(\log R)^{b},
\end{equation}
for some $b<-1$.

We use inequality \eqref{EQ:10bis} with the particular choice of
test function
\begin{equation}\label{22}
\psi(x)\equiv\psi_n(x)=\varphi(x)\eta_n(x),
\end{equation}
where
\begin{equation}\label{8}
 \varphi(x)=\begin{cases}\begin{array}{ll}
   1&\quad\text{for }r(x)<R,\\
   \left(\frac{r(x)}{R}\right)^{-C_1\delta}&\quad\text{for }r(x)\geq
   R,
   \end{array}\end{cases}
\end{equation}
with $R>0$ fixed, $C_1>0$ a fixed constant to be chosen later large
enough and $\delta=\overline\gamma-\gamma>0$ as defined in
\eqref{defgamma} and \eqref{defdelta}, and for $n\in\N$
\begin{equation}\label{9}
 \eta_n(x)=\begin{cases}\begin{array}{ll}
   1&\quad\text{for }r(x)<nR,\\
   2-\frac{r(x)}{nR}&\quad\text{for }nR\leq r(x)\leq 2nR,\\
   0&\quad\text{for }r(x)\geq2nR.
 \end{array}\end{cases}
\end{equation}
We further specialize the choice of $\delta$, and hence of $\gamma$,
in inequality \eqref{EQ:10bis} by setting $\delta=\frac{1}{\log R}$.
Here and for the rest of the proof we always assume that $R>0$ is
chosen large enough so that condition \eqref{volHP2} holds.

Note that $\psi_n\in Lip_c(M)$ with $0\leq\psi_n\leq1$ on $M$ and
one has
$$\nabla\psi_n=\varphi\nabla\eta_n+\eta_n\nabla\varphi.$$ Hence for
every $a>0$ one has $$|\nabla\psi_n|^a\leq
2^a(\varphi^a|\nabla\eta_n|^a+\eta_n^a|\nabla\varphi|^a).$$
Substituting in \eqref{EQ:10bis} one obtains
\begin{equation}\label{15}
\int_M V\,|u|^{\sigma+\gamma} \psi_n^{pk} \,d\mu\leq
C\,\delta^{-p\frac{\gamma+\sigma}{\sigma-p+1}}[I_1+I_2],
\end{equation}
where
\begin{equation}\label{33}
\begin{aligned}
I_1&=&\int_MV^{-\frac{\gamma+p-1}{\sigma-p+1}}\varphi^{p\frac{\sigma+\gamma}{\sigma-p+1}}
|\nabla \eta_n|^{p\frac{\sigma+\gamma}{\sigma-p+1}}\,d\mu,\\
I_2&=&\int_MV^{-\frac{\gamma+p-1}{\sigma-p+1}}\eta_n^{p\frac{\sigma+\gamma}{\sigma-p+1}}
|\nabla \varphi|^{p\frac{\sigma+\gamma}{\sigma-p+1}}\,d\mu.
\end{aligned}
\end{equation}

Now we easily see that
\begin{equation}\label{34}
\begin{aligned}
\delta^{-p\frac{\gamma+\sigma}{\sigma-p+1}}I_1&=\delta^{-p\frac{\gamma+\sigma}{\sigma-p+1}}\int_MV^{-\frac{\gamma+p-1}{\sigma-p+1}}\varphi^{p\frac{\sigma+\gamma}{\sigma-p+1}}
|\nabla \eta_n|^{p\frac{\sigma+\gamma}{\sigma-p+1}}\,d\mu,\\
&=\delta^{-p\frac{\gamma+\sigma}{\sigma-p+1}}\int_{B_{2nR}\setminus
B_{nR}}V^{-\frac{\overline\gamma+p-1-\delta}{\sigma-p+1}}\varphi^{p\frac{\sigma+\gamma}{\sigma-p+1}}
|\nabla \eta_n|^{p\frac{\sigma+\gamma}{\sigma-p+1}}\,d\mu,\\
&\leq\delta^{-p\frac{\gamma+\sigma}{\sigma-p+1}}n^{-pC_1\delta\frac{\sigma+\gamma}{\sigma-p+1}}(nR)^{-p\frac{\sigma+\gamma}{\sigma-p+1}}\int_{B_{2nR}\setminus
B_{nR}}V^{-\frac{\overline\gamma+p-1-\delta}{\sigma-p+1}}\,d\mu.
\end{aligned}
\end{equation}
Hence, for $R>0$ large enough and thus $\delta>0$ small enough, by
condition \eqref{23} we have
\begin{equation*}
\delta^{-p\frac{\gamma+\sigma}{\sigma-p+1}}I_1\leq
C\delta^{-p\frac{\gamma+\sigma}{\sigma-p+1}}n^{-pC_1\delta\frac{\sigma+\gamma}{\sigma-p+1}}(nR)^{-p\frac{\sigma+\gamma}{\sigma-p+1}}
(2nR)^{\alpha+\frac{C_0\delta}{\sigma-p+1}}\big(\log(2nR)\big)^b.
\end{equation*}
By our definition of $\alpha$, the previous inequality yields
\begin{equation}\label{10}
\delta^{-p\frac{\gamma+\sigma}{\sigma-p+1}}I_1\leq
C\delta^{-p\frac{\overline\gamma+\sigma-\delta}{\sigma-p+1}}n^{\frac{\delta}{\sigma-p+1}(C_0+p-pC_1(\sigma+\overline\gamma-\delta))}(R^\delta)^{\frac{p+C_0}{\sigma-p+1}}
\big(\log(2nR)\big)^b.
\end{equation}
We choose $R>0$ so large that $\delta=\frac{1}{\log
R}<\frac{\sigma+\overline\gamma}{2}$ and
\begin{equation}\label{13}
C_1>2\frac{C_0+p+1}{p(\sigma+\overline\gamma)}
\end{equation}
in \eqref{8}. Since $R^\delta= e$, from \eqref{10} we deduce
\begin{equation}\label{11}
\delta^{-p\frac{\gamma+\sigma}{\sigma-p+1}}I_1\leq C
\delta^{-p\frac{\overline\gamma+\sigma-\delta}{\sigma-p+1}}n^{-\frac{\delta}{\sigma-p+1}}
\big(\log(2nR)\big)^b.
\end{equation}
Now we estimate
\begin{equation}\label{12}
\begin{aligned}
\delta^{-p\frac{\gamma+\sigma}{\sigma-p+1}}I_2&=\delta^{-p\frac{\gamma+\sigma}{\sigma-p+1}}\int_MV^{-\frac{\gamma+p-1}{\sigma-p+1}}\eta_n^{p\frac{\sigma+\gamma}{\sigma-p+1}}
|\nabla\varphi|^{p\frac{\sigma+\gamma}{\sigma-p+1}}\,d\mu\\
&\leq\delta^{-p\frac{\gamma+\sigma}{\sigma-p+1}}\int_{M\setminus
B_R}
V^{-\frac{\gamma+p-1}{\sigma-p+1}}\big(C_1\delta R^{C_1\delta}r(x)^{-C_1\delta-1}\big)^{p\frac{\sigma+\gamma}{\sigma-p+1}}\,d\mu\\
&\leq
CR^{C_1p\delta\frac{\sigma+\gamma}{\sigma-p+1}}\int_{M\setminus B_R}
V^{-\frac{\overline\gamma+p-1-\delta}{\sigma-p+1}}r(x)^{-(C_1\delta+1)p\frac{\sigma+\gamma}{\sigma-p+1}}\,d\mu.
\end{aligned}
\end{equation}
In order to proceed with our estimates we recall that if
$f:[0,\infty)\rightarrow[0,\infty)$ is a nonnegative decreasing
function and \eqref{23} holds, then for any small enough
$\epsilon>0$ and any sufficiently large $R>1$ we have
\begin{equation}\label{219}
\int_{M\setminus B_R} f(r(x)) V(x)^{-\beta+\epsilon}\,d\mu\leq
C\int_{\frac{R}{2}}^{+\infty}f(r)r^{\alpha+C_0\epsilon-1}(\log
r)^b\,dr
\end{equation}
for some positive constant $C$, see \cite[formula (2.19)]{GrigS}.
Recalling that $R^\delta=e$ and using inequality \eqref{219} in
\eqref{12}, we obtain
\begin{equation*}
\delta^{-p\frac{\gamma+\sigma}{\sigma-p+1}}I_2\leq
C\int_{\frac{R}{2}}^{+\infty}r^{-p\frac{\sigma+\gamma}{\sigma-p+1}(C_1\delta+1)+\alpha+C_0\frac{\delta}{\sigma-p+1}-1}(\log
r)^b\,dr
\end{equation*}
Now we define
\begin{equation}\label{38}
s:=p\frac{\sigma+\gamma}{\sigma-p+1}(C_1\delta+1)-\alpha-C_0\frac{\delta}{\sigma-p+1}=\frac{\delta}{\sigma-p+1}\big((\sigma+\overline\gamma-\delta)pC_1-(p+C_0)\big)
\end{equation}
and by our assumptions on $C_1$ and $\delta$, see also \eqref{13},
we have
\begin{equation}\label{39}
0<\frac{\delta}{\sigma-p+1}\leq s \leq
pC_1\frac{(\sigma+\overline\gamma)}{\sigma-p+1}\delta.
\end{equation}
Thus
\begin{equation*}
\delta^{-p\frac{\gamma+\sigma}{\sigma-p+1}}I_2\leq
C\int_{\frac{R}{2}}^{+\infty}r^{-s}(\log r)^b\,\frac{dr}{r}
\end{equation*}
and the change of variable $\xi=s\log r$ yields
\begin{equation}\label{14}
\delta^{-p\frac{\gamma+\sigma}{\sigma-p+1}}I_2\leq
Cs^{-b-1}\int_{1}^{+\infty}e^{-\xi}\xi^b\,d\xi\leq C\delta^{-b-1}.
\end{equation}
Now we insert inequalities \eqref{11} and \eqref{14} into \eqref{15}
and we obtain
\begin{equation}\label{17}
 \int_{B_R} V\,|u|^{\sigma+\gamma}\,d\mu\leq \int_M
V\,|u|^{\sigma+\gamma} \psi_n^{pk} \,d\mu\leq
C\big[\delta^{-p\frac{\overline\gamma+\sigma-\delta}{\sigma-p+1}}n^{-\frac{\delta}{\sigma-p+1}}
\big(\log(2nR)\big)^b +\delta^{-b-1}\big].
\end{equation}
Passing to the $\liminf$ as $n$ tends to infinity in the previous
inequality we deduce
\[ \int_{B_R} V\,|u|^{\sigma+\overline\gamma-\frac{1}{\log R}}\,d\mu\leq C(\log R)^{b+1}.\]
Then, passing to the $\liminf$ as $R$ tends to infinity and using
Fatou's Lemma, we obtain
\[\int_M V\,|u|^{\sigma+\overline\gamma}\,d\mu=0,\]
which immediately yields $u=0$ a.e. in $M$, since $V>0$ a.e. in $M$.

We now turn to the case when {\bf (HP1)} holds, i.e. when we have
\eqref{23} with $b=-1$. We use again Proposition \ref{PR:1}, with
the test functions $\psi_n=\varphi\eta_n$ as in \eqref{22}. From
formulas \eqref{11}, \eqref{14} and \eqref{17} in the proof of the
previous case, but now with $b=-1$ instead of $b<-1$, for every
$R>0$ large enough and every $n\in\N$ we have
\begin{equation}\label{21}
\int_{B_R} V\,|u|^{\sigma+\gamma}\,d\mu\leq
C\big[\delta^{-p\frac{\overline\gamma+\sigma-\delta}{\sigma-p+1}}n^{-\frac{\delta}{\sigma-p+1}}
\big(\log(2nR)\big)^{-1} +1\big].
\end{equation}
Passing to the $\liminf$ as $n$ and $R$ tend to infinity in
\eqref{21}, due to Fatou's Lemma we obtain
\begin{equation}\label{18}
\int_{M} V\,|u|^{\sigma+\overline\gamma}\,d\mu\leq C.
\end{equation}
Now we use Proposition \ref{PR:2} with the family of test functions
$\psi_n=\varphi\eta_n$, and from \eqref{EQ:10ter} we deduce
\begin{equation}\label{28}
\int_M V(x)\,|u|^{\sigma+\gamma} \psi_n^{pk} \,d\mu\leq
C\left[\int_{M\setminus B_R}
V(x)\,|u|^{\sigma+\overline\gamma}\,d\mu\right]^\frac{\gamma+p-1}{\sigma+\overline\gamma}
\left[\delta^{-p\frac{\sigma+\overline\gamma}{\sigma-p+1+\delta}}(I_3+I_4)\right]^\frac{\sigma-p+1+\delta}{\sigma+\overline\gamma}
\end{equation}
where we set
\begin{equation*}
\begin{aligned}
I_3&=&\int_MV^{-\frac{\gamma+p-1}{\sigma-p+1+\delta}}\varphi^{p\frac{\sigma+\overline\gamma}{\sigma-p+1+\delta}}
|\nabla \eta_n|^{p\frac{\sigma+\overline\gamma}{\sigma-p+1+\delta}}\,d\mu,\\
I_4&=&\int_MV^{-\frac{\gamma+p-1}{\sigma-p+1+\delta}}\eta_n^{p\frac{\sigma+\overline\gamma}{\sigma-p+1+\delta}}
|\nabla
\varphi|^{p\frac{\sigma+\overline\gamma}{\sigma-p+1+\delta}}\,d\mu.
\end{aligned}
\end{equation*}
Now we proceed to estimate $I_3$, $I_4$ as $I_1$ and $I_2$,
respectively. Indeed, we start noting that
$$-\frac{\gamma+p-1}{\sigma-p+1+\delta}=-\beta+\frac{\sigma+\overline\gamma}{(\sigma-p+1)(\sigma-p+1+\delta)}\delta.$$
Thus we easily obtain
\begin{equation*}
\delta^{-p\frac{\sigma+\overline\gamma}{\sigma-p+1+\delta}}I_3\leq\delta^{-p\frac{\sigma+\overline\gamma}{\sigma-p+1+\delta}}
n^{-pC_1\delta\frac{\sigma+\overline\gamma}{\sigma-p+1+\delta}}(nR)^{-p\frac{\sigma+\overline\gamma}{\sigma-p+1+\delta}}\int_{B_{2nR}\setminus
B_{nR}}V^{-\beta+\frac{\sigma+\overline\gamma}{(\sigma-p+1)(\sigma-p+1+\delta)}\delta}\,d\mu.
\end{equation*}
By condition \eqref{volHP2} for $\delta>0$ small enough we have
\begin{equation*}
\delta^{-p\frac{\sigma+\overline\gamma}{\sigma-p+1+\delta}}I_3\leq
C\delta^{-p\frac{\sigma+\overline\gamma}{\sigma-p+1+\delta}}
n^{\frac{(\sigma+\overline\gamma)(C_0+p-C_1p(\sigma-p+1))}{(\sigma-p+1)(\sigma-p+1+\delta)}\delta}
R^{\frac{(\sigma+\overline\gamma)(p+C_0)}{(\sigma-p+1+)(\sigma-p+1+\delta)}\delta}(\log(2nR))^{-1}.
\end{equation*}
Choosing $\delta>0$ small enough and
\begin{equation}\label{25}
C_1>\frac{2+p+C_0}{(\sigma-p+1)p}
\end{equation}
we obtain
\begin{equation}\label{24}
\delta^{-p\frac{\sigma+\overline\gamma}{\sigma-p+1+\delta}}I_3\leq
C\delta^{-p\frac{\sigma+\overline\gamma}{\sigma-p+1+\delta}}
n^{-\frac{\sigma+\overline\gamma}{(\sigma-p+1)^2}\delta}
(\log(2nR))^{-1}.
\end{equation}
On the other hand we have
\begin{equation*}
\delta^{-p\frac{\sigma+\overline\gamma}{\sigma-p+1+\delta}}I_4\leq
CR^{C_1p\delta\frac{\sigma+\overline\gamma}{\sigma-p+1+\delta}}\int_{M\setminus
B_R}
V^{-\beta+\frac{\sigma+\overline\gamma}{(\sigma-p+1)(\sigma-p+1+\delta)}\delta}r(x)^{-(C_1\delta+1)p\frac{\sigma+\overline\gamma}{\sigma-p+1+\delta}}\,d\mu.
\end{equation*}
By inequality \eqref{219} we obtain
\begin{equation}\label{26}
\delta^{-p\frac{\sigma+\overline\gamma}{\sigma-p+1+\delta}}I_4\leq
C\int_{\frac{R}{2}}^{+\infty}
r^{-p\frac{\sigma+\overline\gamma}{\sigma-p+1+\delta}(C_1\delta+1)+\alpha+C_0\delta\frac{\sigma+\overline\gamma}{(\sigma-p+1)(\sigma-p+1+\delta)}-1}(\log
r)^{-1}\,dr,
\end{equation}
and we set
\[
t:=p\frac{\sigma+\overline\gamma}{\sigma-p+1+\delta}(C_1\delta+1)-\alpha-C_0\delta\frac{\sigma+\overline\gamma}{(\sigma-p+1)(\sigma-p+1+\delta)}.
\]
By \eqref{25} and for $\delta>0$ small enough we have
\[
0<\frac{\sigma+\overline\gamma}{(\sigma-p+1)^2}\delta<t<2\frac{\sigma+\overline\gamma}{(\sigma-p+1)^2}\delta
\]
and using the change of variables $\xi=t\log r$ in \eqref{26} we
obtain
\begin{equation}\label{27}
\delta^{-p\frac{\sigma+\overline\gamma}{\sigma-p+1+\delta}}I_4\leq
C\int_{1}^{+\infty}e^{-\xi}\xi^{-1}\,d\xi\leq C.
\end{equation}
Hence from \eqref{28}, \eqref{24} and \eqref{27} we deduce that for
every $n\in\N$ and every $R>0$ large enough
\begin{equation*}
\begin{aligned}
&\int_{B_R}V|u|^{\sigma+\overline\gamma-\delta}\,d\mu\leq\int_{M}V|u|^{\sigma+\gamma}\psi_n^{pk}\,d\mu\\
&\qquad\leq C\left[\int_{M\setminus B_R}
V(x)\,|u|^{\sigma+\overline\gamma}\,d\mu\right]^\frac{\gamma+p-1}{\sigma+\overline\gamma}
\left[\delta^{-p\frac{\sigma+\overline\gamma}{\sigma-p+1+\delta}}
n^{-\frac{\sigma+\overline\gamma}{(\sigma-p+1)^2}\delta}
(\log(2nR))^{-1}+1\right]^\frac{\sigma-p+1+\delta}{\sigma+\overline\gamma}.
\end{aligned}
\end{equation*}
Passing to the $\liminf$ as $n$ tends to infinity we have
\begin{equation*}
\int_{B_R}V|u|^{\sigma+\overline\gamma-\delta}\,d\mu\leq
C\left[\int_{M\setminus B_R}
V(x)\,|u|^{\sigma+\overline\gamma}\,d\mu\right]^\frac{\overline\gamma+p-1-\delta}{\sigma+\overline\gamma}
\end{equation*}
and taking the $\liminf$ as $R$ tends to infinity, using \eqref{18}
and Fatou's Lemma we finally infer that
\[\int_MV|u|^{\sigma+\overline\gamma}\,d\mu=0,\]
and thus $u=0$ a.e. in $M$.
\end{proof}

\begin{proof}[Proof of Theorem \ref{thm2}]
The proof of Theorem \ref{thm2} follows along the same lines of
that of Theorem \ref{thm1} in the simpler case when $b<-1$. Indeed, we apply Proposition \ref{PR:1}
with the same choice of test functions $\psi_n=\varphi\eta_n$
introduced in \eqref{22}, and we obtain as in \eqref{15}
\begin{equation}\label{32}
\int_M V\,|u|^{\sigma+\gamma} \psi_n^{pk} \,d\mu\leq
C\,\delta^{-p\frac{\gamma+\sigma}{\sigma-p+1}}[I_1+I_2],
\end{equation}
with $I_1$, $I_2$ defined in \eqref{33}. As in \eqref{34} we deduce
\begin{equation*}
\delta^{-p\frac{\gamma+\sigma}{\sigma-p+1}}I_1\leq\delta^{-p\frac{\gamma+\sigma}{\sigma-p+1}}n^{-pC_1\delta\frac{\sigma+\gamma}{\sigma-p+1}}(nR)^{-p\frac{\sigma+\gamma}{\sigma-p+1}}\int_{B_{2nR}\setminus
B_{nR}}V^{-\frac{\overline\gamma+p-1-\delta}{\sigma-p+1}}\,d\mu.
\end{equation*}
Now by \eqref{31} we have, similarly to \eqref{11},
\begin{equation}\label{35}
\delta^{-p\frac{\gamma+\sigma}{\sigma-p+1}}I_1\leq C
\delta^{-p\frac{\overline\gamma+\sigma-\delta}{\sigma-p+1}}n^{-\frac{\delta}{\sigma-p+1}}
\big(\log(2nR)\big)^b e^{-\frac{\theta\delta}{\sigma-p+1}\log (nR)\log\log(nR)}.
\end{equation}
In order to estimate $I_2$, we note that as in \eqref{12} one has
\begin{equation}\label{36}
\delta^{-p\frac{\gamma+\sigma}{\sigma-p+1}}I_2\leq
CR^{C_1p\delta\frac{\sigma+\gamma}{\sigma-p+1}}\int_{M\setminus B_R}
V^{-\frac{\overline\gamma+p-1-\delta}{\sigma-p+1}}r(x)^{-(C_1\delta+1)p\frac{\sigma+\gamma}{\sigma-p+1}}\,d\mu.
\end{equation}
We now claim that if $f:[0,\infty)\rightarrow[0,\infty)$ is a
nonnegative decreasing function and \eqref{31} holds, then for any
small enough $\epsilon>0$ and any sufficiently large $R>0$ we have
\begin{equation}\label{37}
\int_{M\setminus B_R} f(r(x)) V(x)^{-\beta+\epsilon}\,d\mu\leq
C\int_{\frac{R}{2}}^{+\infty}f(r)r^{\alpha+C_0\epsilon-1}(\log
r)^be^{-\epsilon\theta\log \frac{r}{2}\log\log \frac{r}{2}}\,dr
\end{equation}
for some positive fixed constant $C$. The proof of the claim is
similar to that of \cite[formula (2.19)]{GrigS} and of \cite[formula
(3.25)]{MaMoPu1}. We sketch it here for the reader's convenience. By
the monotonicity of the involved functions, using condition
\eqref{31} we obtain
\begin{align*}
\int_{M\setminus B_R} f(r(x))& V(x)^{-\beta+\epsilon}\,d\mu\\
   &=\sum_{i=0}^{+\infty}\int_{B_{2^{i+1}R}\setminus B_{2^iR}}f(r(x))V(x)^{-\beta+\epsilon}\,d\mu\\
   &\leq\sum_{i=0}^{+\infty}f(2^iR)\int_{B_{2^{i+1}R}\setminus B_{2^iR}}V^{-\beta+\epsilon}\,d\mu\\
   &\leq C\sum_{i=0}^{+\infty}f(2^iR)e^{-\epsilon\theta \log(2^{i}R)\log\log(2^{i}R)}(2^{i+1}R)^{\alpha+C_0\epsilon}(\log(2^{i+1}R))^b\\
   &\leq \hat{C}\sum_{i=0}^{+\infty}f(2^iR)e^{-\epsilon\theta \log(2^{i-1}R)\log\log(2^{i-1}R)}(2^{i-1}R)^{\alpha+C_0\epsilon}(\log(2^{i-1}R))^b\\
   &\leq \hat{C}\sum_{i=0}^{+\infty}\int_{2^{i-1}R}^{2^iR}f(r)e^{-\epsilon\theta\log\frac{r}{2}\log\log\frac{r}{2}}r^{\alpha+C_0\epsilon-1}(\log r)^b\,dr\\
   &= \hat{C}\int_{\frac{R}{2}}^{+\infty}f(r)e^{-\epsilon\theta\log \frac{r}{2}\log\log \frac{r}{2}}r^{\alpha+C_0\epsilon-1}(\log r)^b\,dr.
\end{align*}
Now from \eqref{36} and \eqref{37} it follows that
\begin{equation*}
\delta^{-p\frac{\gamma+\sigma}{\sigma-p+1}}I_2\leq
C\int_{\frac{R}{2}}^{+\infty}r^{-p\frac{\sigma+\gamma}{\sigma-p+1}(C_1\delta+1)+\alpha+\frac{\delta C_0}{\sigma-p+1}-1}(\log
r)^be^{-\frac{\delta\theta}{\sigma-p+1}\log \frac{r}{2}\log\log \frac{r}{2}}\,dr.
\end{equation*}
We define $s$ as in \eqref{38}, so that from the
above inequality we obtain
\begin{equation*}
\delta^{-p\frac{\gamma+\sigma}{\sigma-p+1}}I_2\leq
C\int_{\frac{R}{2}}^{+\infty}r^{-s}(\log
r)^be^{-\frac{\delta\theta}{\sigma-p+1}\log \frac{r}{2}\log\log \frac{r}{2}}\,\frac{dr}{r}.
\end{equation*}
The change of variable $\xi=s\log \frac{r}{2}$ then
yields
\begin{equation*}
\delta^{-p\frac{\gamma+\sigma}{\sigma-p+1}}I_2\leq
C2^{-s}s^{-1}\int_{s\log\frac{R}{4}}^{+\infty}e^{-\xi}\left(\log2+\frac{\xi}{s}\right)^be^{-\frac{\delta\theta}{s(\sigma-p+1)}\xi\log\frac{\xi}{s} }\,d\xi.
\end{equation*}
By inequality \eqref{39} we have
\begin{equation*}
  \delta^{-p\frac{\gamma+\sigma}{\sigma-p+1}}I_2\leq
Cs^{-b-1}\left(\int_{s\log\frac{R}{4}}^{+\infty}e^{-\xi}\xi^b\,d\xi\right)e^{-\frac{\delta\theta}{\sigma-p+1}\log\frac{R}{4}\log\log\frac{R}{4} }
\end{equation*}
Note that by \eqref{39} we also have
$$s\log\frac{R}{4}\geq\frac{\delta}{\sigma-p+1}\log\frac{R}{4}=\frac{1}{\sigma-p+1}\frac{\log\frac{R}{4}}{\log R}\geq\frac{1}{2(\sigma-p+1)}>0$$
for $R>0$ large enough. Hence, using again \eqref{39}, we deduce that
\begin{equation}\label{40}
\begin{aligned}
\delta^{-p\frac{\gamma+\sigma}{\sigma-p+1}}I_2&\leq
C\delta^{-b-1}\left(\int_{\frac{1}{2(\sigma-p+1)}}^{+\infty}e^{-\xi}\xi^b\,d\xi\right)e^{-\frac{\delta\theta}{\sigma-p+1}\log\frac{R}{4}\log\log\frac{R}{4}}\\
&\leq
C\delta^{-b-1}e^{-\frac{\delta\theta}{\sigma-p+1}\left(\frac{1}{\delta}-\log4\right)\log\left(\frac{1}{\delta}-\log4\right)}\,=
C\delta^{-b-1}e^{\left(-\frac{\theta}{\sigma-p+1}+\frac{\theta\delta\log4}{\sigma-p+1}\right)\log\left(\frac{1}{\delta}-\log4\right)}.
\end{aligned}
\end{equation}
By inequalities \eqref{32}, \eqref{35} and \eqref{40}, for $R>0$
large enough and every $n\in\N$ we have
\begin{equation*}
\begin{aligned}
\int_{B_R} V\,|u|^{\sigma+\gamma} \,d\mu&\leq \int_M
V\,|u|^{\sigma+\gamma} \psi_n^{pk} \,d\mu\\&\leq
C\left[\delta^{-p\frac{\overline\gamma+\sigma-\delta}{\sigma-p+1}}n^{-\frac{\delta}{\sigma-p+1}}
\big(\log(2nR)\big)^b e^{-\frac{\theta\delta}{\sigma-p+1}\log (nR)\log\log(nR)}\right.\\
&\hspace{4,5cm}\left.+\delta^{-b-1}e^{\left(-\frac{\theta}{\sigma-p+1}+\frac{\theta\delta\log4}{\sigma-p+1}\right)\log\left(\frac{1}{\delta}-\log4\right)}\right],
\end{aligned}
\end{equation*}
thus passing to the $\liminf$ as $n$ tends to infinity we obtain
\begin{equation}\label{4}
\int_{B_R} V\,|u|^{\sigma+\gamma} \,d\mu\leq
C\delta^{-b-1}e^{\left(-\frac{\theta}{\sigma-p+1}+\frac{\theta\delta\log4}{\sigma-p+1}\right)\log\left(\frac{1}{\delta}-\log4\right)}.
\end{equation}
Since by our assumption $b+1-\frac{\theta}{\sigma-p+1}<0$, the function on the right-hand side in
the above inequality tends to $0$ as $\delta$ tends to $0^+$, hence
passing to the $\liminf$ as $R$ tends to infinity in \eqref{4} and
using Fatou's Lemma we conclude that
\begin{equation*}
\int_{M} V\,|u|^{\sigma+\overline\gamma} \,d\mu=0,
\end{equation*}
and thus $u=0$ a.e. in $M$.
\end{proof}

\end{document}